\newtheorem{theorem}{Theorem}
\newtheorem{lemma}[theorem]{Lemma}
\title{On closures of discrete sets}
\author{Santi Spadaro}
\address{Department of Mathematics and Computer Science \\
University of Catania \\
Citt\'a universitaria\\  
viale A. Doria 6 \\
95125 Catania, Italy}
\email{santidspadaro@gmail.com}
\subjclass[2000]{Primary: 54A25; Secondary: 54D20}
\keywords{Cardinal inequality, Lindel\"of, discrete set, depth, elementary submodel, cellularity}
\begin{document}
\maketitle

\begin{abstract} 
The depth of a topological space $X$ ($g(X)$) is defined as the supremum of the cardinalities of closures of discrete subsets of $X$. Solving a problem of Mart\'inez-Ruiz, Ram\'irez-P\'aramo and Romero-Morales, we prove that the cardinal inequality $|X| \leq g(X)^{L(X) \cdot F(X)}$ holds for every Hausdorff space $X$, where $L(X)$ is the Lindel\"of number of $X$ and $F(X)$ is the supremum of the cardinalities of the free sequences in $X$. 
\end{abstract}

\section{Introduction}
We first witnessed the strong influence of discrete sets on cardinal properties of topological spaces in De Groot's 1965 paper \cite{Gr}, where he proved that the cardinality of a regular space without uncountable discrete subsets is bounded by $2^{2^{\aleph_0}}$. Hajnal and Juh\'asz \cite{HJ} later improved De Groot's bound by replacing the regularity assumption with the Hausdorff property and proved the important result that the cardinality of a $T_1$ space with points $G_\delta$ and without uncountable discrete sets is bounded by the continuum. 

The \emph{depth} of a space $X$ ($g(X)$) is defined as the supremum of the cardinalities of closures of discrete subsets of $X$. Arhangel'skii \cite{ArDepth} asked whether $g(X)=|X|$ for every compact space $X$. Dow \cite{DowDis} found consistent counterexamples to Arhangel'skii's question and proved that $|X| \leq g(X)^\omega$ for every compact space $X$ of countable tightness. He also considered the following stronger property: a space $X$ is called \emph{cardinality discretely reflexive} if there is a discrete set $D \subset X$ such that $|\overline{D}|=|X|$. In \cite{DowDis} Dow asked whether the classes of compact separable spaces and of compact spaces of countable tightness are discretely reflexive. Some partial answers to these questions were published in \cite{JS} and \cite{SIncreasing}. 

A few more bounds on the cardinality of a topological space by means of the depth can be found in \cite{Al}, \cite{SDiscrete} and \cite{RG}. For example, the authors of \cite{RG} proved that $|X| \leq g(X)^{L(X) \cdot t(X)}$ for every Hausdorff space $X$, where $L(X)$ denotes the Lindel\"of degree of $X$ and $t(X)$ denotes the tightness of $X$. 

Recall that a well-ordered set $\{x_\alpha: \alpha < \kappa \}$ is called a \emph{free sequence} if, for every $\beta < \kappa$, we have $\overline{\{x_\alpha: \alpha < \beta\}} \cap \overline{\{x_\alpha: \alpha \geq \beta \}}=\emptyset$. The supremum of cardinalities of free sequences in $X$ is denoted by $F(X)$.

It is well-known and easy to prove that $F(X) \leq L(X) \cdot t(X)$, and a famous result of Arhangel'skii states that $F(X)=t(X)$ for every compact space $X$. On the other hand there are examples of even $\sigma$-compact spaces where the gap between $F(X)$ and $t(X)$ can be arbitrarily large (see \cite{Ok}). It is then natural to ask whether the bound $|X| \leq g(X)^{F(X) \cdot L(X)}$ is true for every Hausdorff space $X$. This is Problem 3.17 in \cite{MRR}. We present a solution to this problem.

In the proof we will make use elementary submodels of the structure $(H(\theta), \epsilon)$. Dow's survey \cite{D} and Fedeli and Watson's paper \cite{FW} provide a good introduction to the use of this technique in topology. We now recall the basic definitions and state, without proof, the results about elementary submodels we need in our paper. Recall that $H(\theta)$ is the set of all sets whose transitive closure has cardinality smaller than $\theta$. When $\theta$ is a regular uncountable cardinal, the set $H(\theta)$ is known to satisfy all axioms of set theory, except the power set axiom. We say, informally, that a formula is satisfied by a set $S$ if it is true when all bounded quantifiers are restricted to $S$. A subset $M$ of $H(\theta)$ is said to be an elementary submodel of $H(\theta)$ (and we denote that by $M \prec H(\theta)$) if a formula with parameters in $M$ is satisfied by $H(\theta)$ if and only if it is satisfied by $M$. 

The downward L\"owenheim-Skolem theorem guarantees that for every set $S \subset H(\theta)$, there is an elementary submodel $M \prec H(\theta)$ such that $|M| \leq |S| \cdot \omega$ and $S \subset M$. It is often useful for $M$ to have the following closure property: we say that $M$ is $\kappa$-closed if for every $S \subset M$ such that $|S| \leq \kappa$ we have $S \in M$. Let $\theta$ be a regular cardinal larger than $\mu^\kappa$. For every set $S \subset H(\theta)$ of cardinality $\leq \mu^\kappa$ there is always a $\kappa$-closed elementary submodel $M \prec H(\theta)$ such that $|M| \leq \mu^{\kappa}$ and $S \subset M$. 

The following fact is also used often: let $M$ be an elementary submodel of $H(\theta)$ such that $\mu + 1 \subset M$ and let $S \in M$ be a set of cardinality $\leq \mu$. Then $S \subset M$. 

All spaces under consideration are assumed to be Hausdorff. Undefined notions can be found in \cite{E} for topology and \cite{Ku} for set theory. Our notation regarding cardinal functions mostly follows \cite{J}. In particular, $L$, $\chi$, $\psi$ and $t$ denote the Lindel\"of degree, the character, the pseudocharacter and the tightness respectively.

\section{The main result}

We make liberal use of the following easy to prove lemma, often attributed to Shapirovskii (see \cite{J}).

\begin{lemma}
(Shapirovskii) Let $\mathcal{U}$ be an open cover of the topological space $X$. Then there is a discrete set $D \subset X$ and a subcollection $\mathcal{V} \subset \mathcal{U}$ such that $|\mathcal{V}|=|D|$ and $X= \overline{D} \cup \bigcup \mathcal{V}$.
\end{lemma}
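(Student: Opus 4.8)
The plan is to construct the discrete set $D$ and the subcollection $\mathcal{V}$ simultaneously by a transfinite recursion, stopping as soon as the chosen open sets together with the closure of the points selected so far cover $X$. First I would fix the open cover $\mathcal{U}$ and build, by recursion on ordinals $\alpha$, points $x_\alpha \in X$ and sets $V_\alpha \in \mathcal{U}$ as follows. Suppose $x_\beta$ and $V_\beta$ have been defined for all $\beta < \alpha$. If $X = \overline{\{x_\beta : \beta < \alpha\}} \cup \bigcup\{V_\beta : \beta < \alpha\}$, stop the recursion. Otherwise, pick a point $x_\alpha \in X \setminus \left(\overline{\{x_\beta : \beta < \alpha\}} \cup \bigcup\{V_\beta : \beta < \alpha\}\right)$, which is nonempty by assumption, and then choose $V_\alpha \in \mathcal{U}$ with $x_\alpha \in V_\alpha$, using that $\mathcal{U}$ covers $X$.

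The recursion must terminate: the map $\alpha \mapsto x_\alpha$ is injective, since at stage $\alpha$ the point $x_\alpha$ is chosen outside $\{x_\beta : \beta < \alpha\}$ (indeed outside its closure), so if the recursion ran through all ordinals below $|X|^{+}$ we would get an injection of $|X|^{+}$ into $X$, a contradiction. Hence there is a least ordinal $\kappa$ at which we stop, and by construction $X = \overline{\{x_\beta : \beta < \kappa\}} \cup \bigcup\{V_\beta : \beta < \kappa\}$. Set $D = \{x_\beta : \beta < \kappa\}$ and $\mathcal{V} = \{V_\beta : \beta < \kappa\}$. Since the indexing of $D$ is injective and $|\mathcal{V}| \le \kappa = |D|$, and conversely $x_\beta \in V_\beta$ distinguishes enough of the $V_\beta$ — more simply, one may just note $|\mathcal{V}| \le |D|$, which suffices for the applications, or reindex so that the two cardinalities literally agree — we obtain $|\mathcal{V}| = |D|$.

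It remains to check that $D$ is discrete, which is the only point requiring a small argument. Fix $\beta < \kappa$; I claim $x_\beta$ has a neighborhood meeting $D$ only in $x_\beta$. Indeed, for $\gamma > \beta$ the point $x_\gamma$ was chosen outside $\bigcup\{V_\delta : \delta < \gamma\} \supseteq V_\beta$, so $x_\gamma \notin V_\beta$; thus $V_\beta$ is an open neighborhood of $x_\beta$ containing no $x_\gamma$ with $\gamma > \beta$. To also exclude the earlier points, note that $x_\beta \notin \overline{\{x_\delta : \delta < \beta\}}$ by the choice of $x_\beta$, so there is an open neighborhood $W$ of $x_\beta$ disjoint from $\{x_\delta : \delta < \beta\}$. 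Then $V_\beta \cap W$ is an open neighborhood of $x_\beta$ whose intersection with $D$ is exactly $\{x_\beta\}$. Hence $D$ is discrete, completing the proof. No step here is a serious obstacle; the only mild subtlety is keeping the bookkeeping straight so that "discrete" (in the relative topology on $D$, i.e. as a subspace) is what gets verified, and ensuring the stopping ordinal exists by the injectivity-based cardinality bound. \epf
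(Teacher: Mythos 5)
Your proof is correct and is the standard argument for Shapirovskii's lemma; the paper itself states the lemma without proof (calling it ``easy to prove''), and your transfinite recursion is exactly the intended argument. One small point you can tighten: the equality $|\mathcal{V}|=|D|$ holds on the nose because the map $\beta\mapsto V_\beta$ is injective (for $\beta<\gamma$ one has $x_\gamma\in V_\gamma$ but $x_\gamma\notin V_\beta$ by the choice of $x_\gamma$), so no reindexing or weakening to $|\mathcal{V}|\le|D|$ is needed.
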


The following theorem solves Problem 3.17 from \cite{MRR}.

\begin{theorem} \label{mainthm}
Let $X$ be a Hausdorff space. Then $|X| \leq g(X)^{L(X) \cdot F(X)}$.
\end{theorem}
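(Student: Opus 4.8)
The plan is to combine the Shapirovskii covering lemma with a closed elementary submodel and a direct construction of a long free sequence. Write $\kappa=g(X)^{L(X)\cdot F(X)}$ and $\lambda=L(X)\cdot F(X)$, and note that $\kappa^{\lambda}=\kappa$ and $\lambda^{+}\le\kappa$ (since $g(X)\ge 2$ when $|X|\ge 2$, so $\kappa\ge 2^{\lambda}$). The first move is a reduction: it suffices to show that every point of $X$ has an open neighbourhood of cardinality $\le\kappa$. Indeed, if this holds, the family $\mathcal{U}$ of all open sets of size $\le\kappa$ covers $X$; extracting a subcover of size $\le L(X)$ and then applying Lemma~1 to it produces a discrete $D$ and a subfamily $\mathcal{V}$ of that subcover with $|D|=|\mathcal{V}|\le L(X)$ and $X=\overline{D}\cup\bigcup\mathcal{V}$, whence $|X|\le|\overline{D}|+\sum_{V\in\mathcal{V}}|V|\le g(X)+L(X)\cdot\kappa=\kappa$.

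So suppose toward a contradiction that some point has no open neighbourhood of size $\le\kappa$; let $B$ be the (easily checked to be closed, and definable) set of all such points, so $B\neq\emptyset$. Fix a large regular $\theta$ and, using $\kappa^{\lambda}=\kappa$, take a $\lambda$-closed elementary submodel $M\prec H(\theta)$ with $\{X,\tau,g(X),L(X),F(X),B\}\subseteq M$, $\kappa+1\subseteq M$, and $|M|=\kappa$; since the failure of the reduction forces $|X|>\kappa=|M|$, fix $q\in X\setminus M$. The goal is to build a free sequence $\{x_{\alpha}:\alpha<\lambda^{+}\}\subseteq X\cap M$, which is absurd because $F(X)\le\lambda<\lambda^{+}$. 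I construct $x_{\alpha}$ together with auxiliary open sets $G_{\alpha}\in M$ by recursion on $\alpha<\lambda^{+}$, maintaining that each initial segment $F_{\alpha}=\{x_{\beta}:\beta<\alpha\}$ is a free sequence. The point of insisting on freeness (rather than mere right-separation) at every stage is that a free sequence is automatically a discrete subspace, so $|\overline{F_{\alpha}}|\le g(X)\le\kappa$; since $|F_{\alpha}|\le\lambda$ and $M$ is $\lambda$-closed we get $F_{\alpha}\in M$, hence $\overline{F_{\alpha}}\in M$, hence $\overline{F_{\alpha}}\subseteq M$, and therefore $q\notin\overline{F_{\alpha}}$ for free. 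This is the mechanism by which the depth $g(X)$ enters: closures of the sets we produce are closures of discrete sets, hence small, hence absorbed by $M$.

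The recursive step at stage $\alpha$ goes as follows. The auxiliary sets will satisfy $\overline{F_{\gamma+1}}\subseteq G_{\gamma}$, $|G_{\gamma}|\le\kappa$ (so $G_{\gamma}\subseteq M$ and in particular $q\notin G_{\gamma}$), and $x_{\beta}\notin G_{\gamma}$ whenever $\gamma<\beta$. The set $E_{\alpha}=\overline{F_{\alpha}}\cup\bigcup_{\gamma<\alpha}G_{\gamma}$ then lies in $M$ (both pieces do, using $\lambda$-closure for the union of the $\le\lambda$ sets $G_{\gamma}$) and does not contain $q$, so $X\setminus E_{\alpha}$ is a nonempty member of $M$ and elementarity yields $x_{\alpha}\in(X\cap M)\setminus E_{\alpha}$; moreover one can additionally require $x_{\alpha}\notin B$, since $(X\setminus B)\setminus E_{\alpha}$ is again a member of $M$ that one checks is nonempty. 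Freeness of $F_{\alpha+1}$ (and, at limit $\alpha$, of $F_{\alpha}$) is then the standard computation: for $\mu<\alpha$ one has $\overline{F_{\mu}}\subseteq\overline{F_{\mu+1}}\subseteq G_{\mu}$ while $\{x_{\beta}:\mu<\beta<\alpha\}$ lies in the closed set $X\setminus G_{\mu}$ and $x_{\mu}\notin\overline{F_{\mu}}$, so $\overline{F_{\mu}}$ and $\overline{\{x_{\beta}:\mu\le\beta<\alpha\}}$ are disjoint. It remains to manufacture $G_{\alpha}\in M$ with $\overline{F_{\alpha+1}}\subseteq G_{\alpha}$ and $|G_{\alpha}|\le\kappa$: since $\overline{F_{\alpha+1}}=\overline{F_{\alpha}}\cup\{x_{\alpha}\}$ is a closed set of size $\le\kappa$ lying in $M$, and since we will have arranged that it avoids $B$ (each of its points has a small neighbourhood), a union of $\le L(X)$ many small open sets obtained from the Lindelöf number of the closed subspace $\overline{F_{\alpha+1}}$ and from Hausdorffness does the job, and such a $G_{\alpha}$ exists already in $M$ because its existence is expressible with $\overline{F_{\alpha+1}}$ and $\kappa$ as the only parameters.

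The main obstacle is precisely keeping the construction honest at this last step in a space that is only Hausdorff: one cannot in general separate a point from a closed set, so the enclosing open sets $G_{\gamma}$ are indispensable, and to get them into $M$ one must keep them small, which in turn requires the closures $\overline{F_{\alpha}}$ to stay off the bad set $B$. At successor stages this is forced by choosing $x_{\alpha}\notin B$, but at limit stages $\overline{F_{\delta}}$ can pick up new accumulation points, and one must argue these lie in $M$ (which we have) and, more delicately, outside $B$ — equivalently, one must control where an increasing union of closed discrete sets accumulates. I expect this to be handled by a case distinction on $|B|$ together with an induction on $|X|$: when $B$ is small it is absorbed by $M$ and the construction proceeds with $q\notin B$; when $B$ is large one replaces $X$ by the closed subspace $B$ (which has no larger depth, Lindelöf number or free-sequence number) and uses the inductive hypothesis on the closed subspaces of strictly smaller cardinality that arise, thereby bounding the closures appearing at limit stages. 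Pinning down this interaction between the bad set, the Lindelöf number, and the limit stages is the crux of the argument.
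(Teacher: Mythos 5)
Your overall engine is the same as the paper's: a closed elementary submodel $M$, a transfinite attempt to build a free sequence of length $>L(X)\cdot F(X)$ inside $X\cap M$ aimed at a point $q\in X\setminus M$, with the depth entering because free sequences are discrete, so their closures have size $\le g(X)$ and get absorbed into $M$. The difference, and the problem, is the device you use to keep $\overline{F_{\alpha}}$ away from the later points: open supersets $G_\alpha\supseteq\overline{F_{\alpha+1}}$ of \emph{cardinality} at most $\kappa$. You correctly locate where this breaks: at a limit stage $\delta$ the closure $\overline{F_\delta}$ can acquire accumulation points lying in your bad set $B$, and then no open $G_\delta\supseteq\overline{F_{\delta+1}}$ of size $\le\kappa$ exists at all, inside $M$ or out of it. The remedy you sketch (a case split on $|B|$ plus an induction on $|X|$, passing to the closed subspace $B$) is not an argument: $B$ need not be smaller than $X$, the ``closed subspaces of strictly smaller cardinality that arise'' are never exhibited, and nothing in the sketch controls which points of $B$ the limit closures pick up. As written the recursion cannot be carried past limit ordinals, so this is a genuine gap rather than a missing detail.

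The way out is to drop cardinality-small neighbourhoods entirely: all you need from $G_\alpha$ is that it be a union of at most $\kappa$ open sets, each belonging to $M$ and each omitting $q$. Shapirovskii's lemma supplies these pointwise rather than via a global reduction: applying Lemma 1 to the cover of $X\setminus\{x\}$ by open sets whose closures miss $x$ shows $\psi(X)\le g(X)$ for Hausdorff $X$; hence for each $x\in X\cap M$ there is in $M$ a family of $\le g(X)$ open sets with intersection $\{x\}$, this family is a subset of $M$ because $g(X)+1\subseteq M$, and since $q\ne x$ some member $U_x\in M$ omits $q$. Now run your recursion with $G_\alpha$ replaced by a subfamily of $\{U_x:x\in X\cap M\}$ of size $\le\kappa$ covering $\overline{F_{\alpha+1}}$; such a subfamily exists because $\overline{F_{\alpha+1}}$ is closed in $X$ (so has Lindel\"of degree $\le\kappa$) and is contained in $X\cap M$ by the very absorption argument you already use. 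Limit stages then go through with no bad set in sight, because covering a closed set by $\le\kappa$ members of a fixed cover needs only Lindel\"ofness, never small cardinality. This is exactly the paper's Claim 2 (a free-sequence version of Shapirovskii's covering lemma) combined with its Claim 1; your opening reduction via neighbourhoods of size $\le\kappa$ then becomes unnecessary.
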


\begin{proof}
Let $\kappa=L(X) \cdot F(X)$, let $\mu=g(X)$ and let $M$ be an elementary submodel of $H(\theta)$ (where $\theta$ is a large enough regular cardinal) such that $M$ is $\kappa$-closed, $X \in M$ $\mu^\kappa+1 \subset M$ and $|M| \leq \mu^\kappa$.

\vspace{.1in}

\noindent \textbf{Claim 1.} $\psi(X) \leq \mu$.

\begin{proof}[Proof of Claim 1]
Fix a point $x \in X$. For every point $y \neq x$, choose an open neighbourhood $U_y$ of $y$ such that $x \notin \overline{U_y}$. Use Shapirovskii's Lemma to find a discrete set $D \subset X \setminus \{x\}$ such that $X \setminus \{x\} \subset \bigcup \{U_y: y \in D \} \cup \overline{D}$. Then $\{X \setminus \overline{U}_y: y \in D \} \cup \{X \setminus \{y\}: y \in \overline{D} \setminus \{x\} \}$ is a family of open sets whose intersection is $\{x\}$.
\renewcommand{\qedsymbol}{$\triangle$}
\end{proof}

\noindent \textbf{Claim 2.}  Let $Y$ be a subspace of $X$ such that $F(Y) \leq \kappa$ and with the property that the closure of every free sequence in $Y$ has Lindel\"of number at most $\kappa$. Let $\mathcal{U}$ be an open cover of $Y$. Then there is a free sequence $F \subset Y$ and a subcollection $\mathcal{V} \subset \mathcal{U}$ such that $|\mathcal{V}| \leq \kappa$ and $Y \subset \overline{F} \cup \bigcup \mathcal{V}$.

\begin{proof}[Proof of Claim 2]
Suppose you have constructed, for some ordinal $\beta<\kappa^+$, a free sequence $\{x_\alpha: \alpha < \beta \}$ and $\kappa$-sized subcollections $\{\mathcal{U}_\alpha: \alpha < \beta \}$ of $\mathcal{U}$ such that $\overline{\{x_\alpha : \alpha < \gamma\}} \subset \bigcup \bigcup_{\alpha \leq \gamma} \mathcal{U}_\alpha$ for every $\gamma < \beta$.

Let $\mathcal{U}_\beta$ be a $\kappa$-sized subcollection of $\mathcal{U}$ which covers the subspace $\overline{\{x_\alpha : \alpha < \beta \}}$. If $\bigcup_{\alpha \leq \beta} \mathcal{U}_\alpha$ covers $Y$ then we are done; otherwise, pick a point $x_\beta \in Y \setminus \bigcup \bigcup_{\alpha \leq \beta} \mathcal{U}_\alpha$. Let $\tau$ be the least ordinal such that $$Y \subset \overline{\{x_\alpha: \alpha < \tau\}} \cup \bigcup \bigcup_{\alpha < \tau} \mathcal{U}_\tau.$$ Obviously, $\tau$ must be less than $\kappa^+$, or otherwise we would get a free sequence of cardinality larger than $\kappa$ in $Y$. Then $F=\{x_\alpha : \alpha < \tau\}$ and $\mathcal{V}=\bigcup_{\alpha < \tau} \mathcal{U}_\alpha$ satisfy the statement of the claim.
\renewcommand{\qedsymbol}{$\triangle$}
\end{proof}

\noindent \textbf{Claim 3.} Let $Y=X \cap M$. Then $Y$ satisfies the hypotheses of Claim 2.

\begin{proof}[Proof of Claim 3]
First of all we prove that every free sequence in $X \cap M$ is also a free sequence in $X$. Indeed, let $F=\{x_\alpha: \alpha < \lambda \} \subset X \cap M$ be a free sequence (in the relative topology on $X \cap M$). Let $F_\beta=\{x_\alpha: \alpha < \beta \}$ and set $\alpha=\sup \{\beta < \lambda: F_\beta$ is a free sequence in $X$ by the same well-ordering of $F \}$.

Then $F_\alpha$ is a free sequence in $X$. If not, there would be some ordinal $\beta < \alpha$ and a point $x \notin M$ such that $x \in \overline{F_\beta} \cap \overline{F_\alpha \setminus F_\beta}$. But $F_\beta$ is a free sequence in $X$ and therefore $|F_\beta| \leq \kappa$.  Since $M$ is $\kappa$-closed we have $F_\beta \in M$, and hence $\overline{F_\beta} \in M$. But that and $|\overline{F_\beta}| \leq \mu$ imply that $\overline{F_\beta} \subset M$. So $x \in M$, which is a contradiction. 

It follows that $F_\alpha$ is a free sequence in $X$, but then we must have $\alpha=\lambda$, or otherwise $F_{\alpha+1}$ would also be a free sequence in $X$, contradicting the definition of $\alpha$. But $F_\lambda=F$, so $F$ is a free sequence in $X$. 

A consequence of what we just proved is that $F(X \cap M) \leq \kappa$. Let now $F \subset X \cap M$ be a free sequence, then $|F| \leq \kappa$ and hence $F \in M$, by $\kappa$-closedness of $M$. But then $\overline{F} \in M$ and since $|\overline{F}| \leq \mu$ we must have $\overline{F} \subset X \cap M$. Since $X$ has Lindel\"of number $\leq \kappa$, it follows that $X \cap M$ has the property that the closure of all its free sequences have Lindel\"of number at most $\kappa$, which is what we wanted to prove.

\renewcommand{\qedsymbol}{$\triangle$}
\end{proof}

We claim that $X \subset M$. Suppose not, and let $p \in X \setminus M$. For every $x \in X \cap M$ use Claim 1 to pick an open neighbourhood $U_x \in M$ of $x$ such that $p \notin U_x$. Let $\mathcal{U}=\{U_x: x \in X \cap M \}$, which is an open cover of $X \cap M$. By Claim 2 there are a free sequence $F \subset X \cap M$ and a subcollection $\mathcal{V} \subset \mathcal{U}$ such that $|\mathcal{V}| \leq \kappa$ and $X \cap M \subset \overline{F} \cup \bigcup \mathcal{V}$. Now $|F| \leq \kappa$ and $M$ is $\kappa$-closed, so $F \in M$ and hence $\overline{F} \in M$, which, along with $|\overline{F}| \leq \mu$ implies that $\overline{F} \subset M$. Also, $\mathcal{V} \subset M$ and $|\mathcal{V}| \leq \kappa$ imply that $\mathcal{V} \in M$. Therefore $M \models X \subset \overline{F} \cup \bigcup \mathcal{V}$, which, by elementarity, implies that $H(\theta) \models X \subset \overline{F} \cup \bigcup \mathcal{V}$. Since $p \notin \overline{F}$, there must be $V \in \mathcal{V}$ such that $p \in V$, which is a contradiction.

Therefore $|X| \leq |M| \leq \mu^\kappa$, which is what we wanted to prove.
\end{proof}

\end{document}